\newcommand{\ma}[4]{\scriptsize{\left[ \begin{array}{cc} #1
& #2 \\ #3 & #4  \end{array} \right]}}
\begin{document}

\title[Counting Berg partitions via Sturmian words]{Counting Berg partitions via Sturmian words \\
 and substitution tilings}
\author{Artur Siemaszko and Maciej P. Wojtkowski}
\address{Department of Mathematics and Computer Science\newline University of Warmia and Mazury in Olsztyn
\newline
S\l oneczna 54 \newline 10-710 Olsztyn, POLAND}
\thanks{Research partially supported by the NCN grant 2011/03/B/ST1/04427 }
\email{artur@uwm.edu.pl, wojtkowski@matman.uwm.edu.pl}
%\date{\today}
\keywords{Toral automorphisms,  Markov partitions, Berg partitions, Sturmian sequences, tilings, substitutions } \subjclass[2010]{37D}

            \theoremstyle{plain}
\newtheorem{lemma}{Lemma}%[section]
\newtheorem{proposition}[lemma]{Proposition}
\newtheorem{theorem}[lemma]{Theorem}
\newtheorem{corollary}[lemma]{Corollary}
\newtheorem{fact}[lemma]{Fact}
   \theoremstyle{definition}
\newtheorem{definition}{Definition}
    \theoremstyle{example}
\newtheorem{example}{Example}
    \theoremstyle{remark}
\newtheorem{remark}{Remark}

\newcommand{\macierz}[4]{\scriptsize{\left[\begin{array}{cc} #1 & #2 \\ #3 & #4 \end{array}\right]}}

\begin{abstract}
We develop the connection of Berg partitions with special substitution tilings
of two tiles. We obtain a new proof that the number of Berg partitions
with a fixed connectivity matrix is equal to half of the sum of its entries,
\cite{S-W}.

This approach together with the formula of S\'{e}\'{e}bold \cite{Seb},
for the number of substitutions preserving a given Sturmian sequence,
shows that all of the combinatorial substitutions can be realized
geometrically as Berg partitions.

We treat Sturmian tilings as intersection tilings of bi-partitions.  Using
the symmetries of bi-partitions we obtain geometrically the palindromic
properties of Sturmian sequences (Theorem 3)
established combinatorially by de Luca and Mignosi, \cite{L-M}.
\end{abstract}

\maketitle

\section{Introduction}\label{int}
Let us consider the interval exchange map of two intervals, $J_1$ and $J_2$,
with lengths $s_1$ and $s_2$ respectively. We denote the map by $h: J^s \to J^s,
\ J^s = J_1 \cup J_2$. The discontinuity of the map is superficial. Indeed,
when the endpoints of $J^s$ are identified, we obtain the  rotation of the
circle with the rotation number $\frac{s_1}{s_1+s_2}$,
or $\frac{s_2}{s_1+s_2}$, depending on the order of the intervals.
Let us further consider the suspension flow of $h$ with the roof function
constant on $J_1$ and $J_2$,
and equal to $u_1$ and $u_2$ respectively. We obtain the  flow
$H^t: R_1\cup R_2 \to R_1\cup R_2, t\in \mathbb R$, where
$R_1 = [0,s_1]\times[0,u_1]$ and  $R_2 = [-s_2,0]\times[0,u_2]$. Again the
discontinuity of $H^t$ is superficial.

Let $L \subset \mathbb R^2$
be the lattice generated by two translations $e_0 = (s_2,u_1)$ and
 $f_0 = (-s_1,u_2)$.
Translations of the rectangles $R_1$ and $R_2$ by vectors from the lattice
$L$ form a 2-periodic tiling of the plane.
Hence $R_1 \cup R_2$ is a fundamental domain
of the torus $\mathbb T^2 = \mathbb R^2/L$.

Partition of the torus into $\{R_1,R_2\}$ is called a \emph{bi-partition}.
The union of their horizontal sides in the torus $\mathbb T^2$
is covered by the interval of the horizontal axis $J^s = [-s_2,s_1]$,
the \emph{horizontal spine}. The union of their vertical sides
is covered by the interval of the vertical axis $J^u = [0,u_1+u_2]$,
the \emph{vertical spine}.
Let us note that $J^s\cap J^u$ contains four points in the torus, two endpoints of the
horizontal spine, and two endpoints of the vertical spine.

The flow $H^t$
is given by the constant vertical vector field in the torus $\mathbb T^2$.
Indeed for this field the return map to the horizontal spine
$ J^s = J_1 \cup J_2$ is equal to the interval exchange transformation.

Further let
$(x,y)$ be the Cartesian coordinates in $\mathbb R^2$ and
$(\xi,\eta)$ be the coordinates associated with the basis $(e_0,f_0)$, i.e.,
$ x = s_2\xi -  s_1\eta, y =  u_1\xi + u_2\eta$. We get
$\dot\xi = s_1d^{-1}, \dot\eta = s_2d^{-1}$, where $d = s_1u_1 +s_2u_2$.
In particular we observe that the slope of the trajectory in the
$(\xi,\eta)$ coordinates is equal to $s_2s_1^{-1}$ and hence it is
independent of the roof function.

For a given hyperbolic matrix $F \in GL(2,\mathbb Z)$ with non-negative
entries, let $(s_1,s_2)$ and $(u_1,u_2)$ be, respectively,
the column and the row
eigenvectors with positive entries. The automorphism $\mathcal F$
of the torus $\mathbb T^2$
defined   in the basis $(e_0,f_0)$ by $F$ preserves the horizontal and vertical
lines, i.e., it is given by a diagonal matrix in the Cartesian coordinates
$(x,y)$,  \cite{S-W}. It follows that  the bi-partition   $\{R_1,R_2\}$
is a Berg partition, i.e. it has the Markov property. Indeed in the case of $F$
with both positive eigenvalues ($\det F =1 $) we get
$\mathcal F (J^u) \supset J^u$ and
$\mathcal F(J^s) \subset J^s$. These inclusions hold because both spines
contain the origin, which is a  fixed point of $\mathcal F$.

It can be checked
that the last inclusion holds also in the case of negative stable eigenvalue,
and we will prove it later on.

Hyperbolic automorphisms have in general many
fixed points. Translating the bi-partition $\{R_1,R_2\}$ so that both spines
contain a fixed point of $\mathcal F$, we obtain nonequivalent Berg partitions
with the same connectivity matrix $F^T$,
at least in the case of both positive eigenvalues.
It was proven in \cite{S-W} that
\begin{theorem}\label{main}
The number of nonequivalent Berg partitions for
$\mathcal F$
with the connectivity matrix
$F^T$ is equal to $\left[\frac{\sigma}{2}\right]$,  where $\sigma$
is the sum of all
entries of $F$, and $[\cdot]$ denotes the integer part of a number.
\end{theorem}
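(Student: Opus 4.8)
The plan is to reinterpret a Berg partition as a self-similar substitution tiling by two interval-tiles and then count such tilings combinatorially.

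\emph{Step 1 (from Berg partitions to substitutions).} Fix $\mathcal{F}$ with connectivity matrix $F^{T}$ and let $\lambda>1$ be the expanding eigenvalue of $F$. Given a Berg partition, the Markov property says that under $\mathcal{F}$ each tile is carried across the two tiles according to the matrix $F^{T}$; reading this refinement along the unstable spine $J^{u}$ produces a self-similar tiling of a line by two intervals, with inflation factor $\lambda$ and a two-letter substitution rule $\tau$ whose incidence matrix is $F$ (up to transposition). Conversely, $\tau$ together with the shape $\{R_{1},R_{2}\}$ forced by the eigenvectors rebuilds the bi-partition and hence the Berg partition. I would prove this sets up a bijection between Berg partitions for $\mathcal{F}$ with connectivity matrix $F^{T}$ and admissible substitutions with incidence matrix $F$; the deferred inclusion $\mathcal{F}(J^{s})\subset J^{s}$ promised in the introduction, and the negative-stable-eigenvalue case, are exactly what is needed for the reconstruction direction when $\det F=-1$.

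\emph{Step 2 (counting the substitutions).} With $F$ fixed, the admissible substitutions with incidence matrix $F$ form a single conjugacy class of Sturmian morphisms: they are the tilings obtained by cutting one fixed bi-infinite self-similar tiling at the tile boundaries within a period, two cuts giving the same tiling exactly when they differ by a lattice period, so the class has $\sigma-1$ elements, $\sigma$ being the sum of the entries of $F$. The letter-exchange involution $E$ interchanging $R_{1}$ and $R_{2}$ acts on this class, and a short parity argument (on the total lengths of the two tiles modulo $2$) shows that $E$ has a fixed point precisely when $\sigma$ is even; hence the number of $E$-orbits is $\left[\frac{\sigma}{2}\right]$ in either parity.

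\emph{Step 3 (matching the equivalence) and the obstacle.} It remains to check that two Berg partitions are equivalent in the sense of \cite{S-W} if and only if the corresponding substitutions lie in one $E$-orbit, which together with Steps 1--2 yields the count $\left[\frac{\sigma}{2}\right]$. The part I expect to be the real obstacle is Step 1: making the dictionary ``Berg partition $\leftrightarrow$ substitution'' precise and bijective --- in particular, that the incidence matrix comes out to be $F$ on the nose rather than a conjugate or a power of it, that the stable-spine inclusion holds when the stable eigenvalue is negative so that no Berg partition is lost, and that the geometric notion of equivalence matches the $E$-action. These are precisely the ``which combinatorial substitutions are realized geometrically'' questions flagged in the abstract, so I expect the bulk of the work to sit there, the conjugacy-class count and the parity step being essentially elementary; S\'e\'ebold's formula \cite{Seb} can then be fed into the same picture to conclude, as promised, that every combinatorial substitution fixing a given Sturmian sequence arises from a Berg partition.
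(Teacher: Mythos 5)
Your overall strategy---identify candidate Berg partitions with the $\sigma-1$ cyclic cuts of a self-similar tiling and then quotient by an involution with a fixed point exactly when $\sigma$ is even---is the same skeleton as the paper's argument, but as written it has two genuine gaps. The first is exactly the step you yourself flag and defer: the claimed bijection of Step 1. The paper is explicit that this is not formal (``a priori one can only claim that the number of Berg partitions does not exceed the number of substitutions''): the delicate direction is that \emph{every} admissible cut (equivalently, every placement of the origin in $J^u$ so that $\mathcal F(J^u)$ is a window with $\sigma$ tiles) really yields $\mathcal F(J^s)\subset J^s$, in particular when $\det F=-1$ and the stable eigenvalue is negative. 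This is the heart of the proof, and it is carried by the palindromic structure of the principal tiling (Theorem~\ref{palindromes}) together with the window-content invariance (Proposition~\ref{types}): one splits the window $W=w_1\cup w_2$ into segments of $k+m$ and $l+n$ tiles, uses the ambiguity of the lock to see that $w_1,w_2$ have the same contents as $W_\alpha,W_\beta$, hence lengths $\lambda u_1,\lambda u_2$, so both endpoints of $J^s$ land in $J^s$. Your proposal contains no substitute for this mechanism, so the reconstruction direction---the part that makes the count an equality rather than an upper bound---is missing.

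The second gap is the involution in Step 2. Equivalence of Berg partitions is implemented by an affine map commuting with $\mathcal F$ and sending $Q_i$ to $S_i$; the only relevant one is the rotation by $\pi$ about the center of $J^u$, which preserves each rectangle's label and acts on the associated substitutions by \emph{word reversal}, not by the letter exchange $E$. The exchange $a\leftrightarrow b$ does not even act on the set of substitutions with incidence matrix $F$ (it conjugates $F$ by the swap permutation, changing $F$ unless it is symmetric), and swapping $R_1$ with $R_2$ is not an allowed equivalence. The correct fixed-point analysis is geometric: the $\sigma-1$ origin positions in $J^u$ are identified in symmetric pairs about the center of $J^u$, the center itself being an admissible position precisely when $\sigma$ is even, giving $\left[\frac{\sigma}{2}\right]$. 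Finally, note that the full statement of Theorem~\ref{main} also covers automorphisms represented by $-F$; the paper handles these by composing with the rotation $\mathcal S$ about the centers of the bi-partition and placing the origin at a fixed point of $\mathcal F\circ\mathcal S$, a case your proposal does not address.
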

The formula holds also for the toral automorphisms $\mathcal F$
defined by $-F$ in the basis $(e_0,f_0)$. The proof given in \cite{S-W}
has three distinct cases of eigenvalue signs,
with different geometry, and the common value for the number of Berg partitions
was somewhat surprising. In this paper we give another proof of Theorem~\ref{main},
based on  the connection of Berg partitions with 1-dimensional
substitution tilings, \cite{PF}.

The 1-dimensional tilings defined by the intersection of vertical lines with
a bi-partition  are associated with infinite
Sturmian sequences.  If the bi-partition is a Berg partition
then the tilings are preserved under an appropriate substitution.

The number of different substitutions preserving a given Sturmian sequence
was established by S\'{e}\'{e}bold \cite{Seb} to be equal to the sum of entries
of $F$ minus one. It is essentially the same number as the number of nonequivalent
Berg partitions from Theorem~\ref{main}. The difference arises from the fact
that
reversed substitutions come from equivalent Berg partitions.

This equality shows that all combinatorial substitutions of  Sturmian
words can be realized geometrically as  toral automorphisms acting
on respective Berg partitions. It allows the translation of combinatorial
questions regarding substitutions of Sturmian words into
the geometric language. We describe this connection in Section 5.
Let us note that although we have two essentially identical formulas
they do not follow formally from each other. A priori one can only claim
that the number of Berg partitions  does not exceed the  number of
substitutions.

In Sections 2 and 3 we develop the geometric approach to Sturmian sequences
as intersection tilings of 2-periodic tilings of the plane. Using the
symmetries
of the 2-periodic tilings we give a geometric proof of the
palindromic properties
of Sturmian sequences which were established combinatorially
by de Luca and Mignosi, \cite{L-M}. These properties are crucial in
the counting of Berg partitions obtained in Section 4.

\section{Intersection tilings}\label{til}
Given a bi-partition $\{R_1,R_2\}$, as described above,
we get the 2-periodic tiling of $\mathbb R^2$ by the rectangles
$R_1 +L$ and $R_2+L$. We will consider the \emph{intersection tilings}
of vertical lines in $\mathbb R^2$. Namely, for a given vertical line $l$,
intersecting the horizontal spine $J^s$,
we consider its partition by the intersection with the rectangles of the
2-periodic tiling. Another way to describe the intersection tiling is
to cut the line $l$ by the translates
of the horizontal spine $J^s + L$. The lengths of these intervals ({\it tiles})
are $u_1$ and $u_2$, and we will assume for convenience that $u_1 \neq u_2$.

We describe such a tiling by a sequence
$\omega \in \Omega = \{a,b\}^{\mathbb Z}$, where the letters $a$ and $b$
correspond to
the intersection with the translates of $R_1$
and $R_2$, respectively. For a sequence $\omega = \{c_n\}$ the symbol
$c_0$ corresponds to the tile with the lower endpoint on the horizontal
spine $J^s$.

The set of sequences describing intersection tilings for a given bi-partition
will be denoted by $\mathcal T \subset \Omega$. The sequences in $\mathcal T$ are
very special. Several combinatorial descriptions are available.
A particularly friendly introduction into the subject can be found
in the paper of Series \cite{Ser}. We will elaborate some of these
properties in what follows.

Let us note that the space of tilings $\mathcal T \subset \Omega$ is invariant
under the shift. It can be identified with the horizontal spine $J^s$,
and the shift becomes the interval exchange map $h:J^s \to J^s$. The vertical
lines which intersect translates of the vertical spine $J^u+L$ give rise to
two tilings differing by the exchange of $[a,b]$ and $[b,a]$. Indeed
the intersection with $J^u+z, z \in L$ can be interpreted
as the intersection with first $R_1+z$ and then $R_2+z$ (intersection on
the ``right side''), or the other way around
(intersection on the ``left side'').  In this way two sequences from $\mathcal T$
are glued into one point in $J^s$.

The study of such 1-dimensional tilings is thus equivalent to the
analysis of the symbolic dynamics for the rotation  $h$
 and the partition $J^s = J_1\cup J_2$. It goes back to the papers of
Hedlund
 and Morse \cite{H-M1}, \cite{H-M2}. Their Sturmian sequences
coincide with our intersection tilings up to countably many
nontransitive sequences.

We can further introduce the tiling space $R_1\cup R_2$, as the suspension
of the shift on $\mathcal T$. It carries the Kronecker flow $H^t$. Both of these
tiling spaces come with respective invariant Lebesgue measures.
They have their distinct advantages and we will freely use one or the other.

If the rotation number $\frac{s_1}{s_1+s_2}$ is rational then
there are only finitely many tilings, shifts of a single periodic tiling.
Indeed in such a case the lattice $L$ contains a vertical vector,
and the corresponding translation preserves the 2-periodic tiling of
$\mathbb R^2$, and
all the intersection tilings.

The structure of the periodic tiling is far from arbitrary. We will
elaborate on its structure in the following.

In the irrational case there are continuum of different tilings.
For simplicity of formulations we will assume that
$\frac{s_2}{s_1}$ is irrational, and we include some comments on the rational
case. In the irrational case two tilings are the same up to the shift
if and only if
one of the lines is a translate of the other by a vector from the lattice
$L$. Indeed if a translation takes one tiling into another then
the closure in $\mathbb T^2$ of the set of all $a$-tiles from
the first line is mapped by the translation into the
closure of the set of all $a$-tiles in the second line. These closures
are equal to $R_1$ and hence the translation vector must belong to the
lattice $L$.

\section{Symmetry of the intersection tilings}\label{til}

The group of symmetries preserving the 2-periodic tiling  of the plane,
by the translates of $R_1$ and $R_2$,  contains translations from $L$.
It also contains rotations by $\pi$ around the  centers of the vertical and
horizontal spines, and the centers of the rectangles $R_1, R_2$,
or their translations by $L$. The set of fixed points of such rotations
can be described as $(0,\frac{1}{2}(s_1-s_2))
+ \frac{1}{2} L$. Indeed, the rotation by $\pi$ around the center of
$J^s$ has four fixed points in the torus $\mathbb T^2$: the centers of
$J^s$ and $J^u$, and the centers of $R_1$ and $R_2$. Moreover for any two
copies of $J^u$ in the plane $\mathbb R^2$ the rotation by $\pi$ which
exchanges them, preserves the intersection  $J^s\cap J^u \subset
\mathbb T^2$, and hence also the horizontal spine $J^s$.
It follows that this rotation takes a copy of $J^s$ in $\mathbb R^2$
into another copy,  and thus preserves the
2-periodic tiling of $\mathbb R^2$.

The vertical line through the origin,
and its translations by vectors from $L$ have two tilings,
arising from two tilings of the vertical spine, $[a,b]$ and
$[b,a]$. These tilings will be called {\it principal},
and their ambiguous part where the vertical line intersects
a translation of the vertical spine will be called {\it the lock}.
The principal tiling  $\{c_i\} \in \mathcal T$ is
unique up to the shift and the change of
the lock described above. Further it is {\it palindromic} in the sense that
if the lock is equal to $[c_k,c_{k+1}]$ then $c_{k-r} = c_{k+1+r}$
for $r=1,2,...$. Indeed, for a line $l$ with the principal
tiling we  rotate the plane by $\pi$ around the center
of the copy of the vertical spine $J^u$ in our line $l$. The line
$l$ and the 2-periodic tiling are preserved, and the intersection tiling
is reversed, except for the lock.

More generally we say that a 1-dimensional tiling $\{c_i\} \in \Omega$
is {\it almost palindromic} if there are integers $k, l$ such that
$c_{k-r} = c_{l+r}$ for $r=1,2,...$.

\begin{proposition}\label{5pal}
If an intersection tiling is almost palindromic then it must
be the principal tiling or, up to the shift,
the tiling of one of three lines passing through
the centers of $R_1, R_2$ and $J^s$.
\end{proposition}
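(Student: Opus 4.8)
The plan is to exploit the symmetry group of the 2-periodic tiling exactly as in the discussion of principal tilings, but now localized at each of the relevant centers. Recall that rotations by $\pi$ about any point of $(0,\tfrac12(s_1-s_2)) + \tfrac12 L$ preserve the 2-periodic tiling, and that such a rotation reverses the intersection tiling of any vertical line it preserves — except possibly at a "seam" where the rotated line crosses a translate of a spine. A vertical line $l$ is preserved by the rotation about a point $p$ precisely when $p$ lies on $l$ or on a vertical translate; among the centers available, the relevant ones on a given vertical line are the center of a copy of $J^u$ (giving the principal tiling, already treated), and the centers of (translates of) $R_1$, $R_2$, and $J^s$. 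So I would first argue that if $\{c_i\}$ is almost palindromic, the equality $c_{k-r}=c_{l+r}$ for all $r\ge 1$ forces the existence of a tiling-preserving isometry of the line that reverses the tiling outside a bounded seam, and then classify which centers can produce such an isometry.

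The key steps, in order: (1) Translate the almost-palindromic condition into geometry. The interval of the line between the tiles indexed $k$ and $l$ (the potential seam) has some length; reflecting the bi-infinite tiling through the midpoint of this interval matches $c_{k-r}$ with $c_{l+r}$. I claim this reflection must be realized by a rotation by $\pi$ of the plane that preserves the 2-periodic tiling — because the tiling determines the line as the closure of its $a$-tiles (as argued in Section 2 for the irrational case), so any symbolic reversal extends to a genuine plane isometry. (2) Identify the center of that rotation: it lies on the vertical line $l$ (or a vertical translate), and by the classification of fixed points of tiling-preserving $\pi$-rotations it must be one of: the midpoint of a copy of $J^u$, the midpoint of a copy of $J^s$, or the center of a copy of $R_1$ or $R_2$. (3) The first case gives, by definition, the principal tiling. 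In the remaining three cases the center lies on a specific vertical line — the line through the center of $J^s$, or through the center of $R_1$, or through the center of $R_2$ — and these lines are pinned down up to the lattice $L$, hence up to the shift on $\mathcal T$ there are exactly three such tilings. (4) Finally check that in these three cases the rotation genuinely reverses the tiling with at most a bounded seam (the intersection with the spine), so the tilings really are almost palindromic and no others arise.

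I would expect the main obstacle to be step (1): upgrading a purely symbolic palindromic relation into a geometric reflection of the plane. The subtlety is that the line carrying the tiling is only determined up to translation along itself by the vertical period (in the rational case) or not at all constrained vertically (in the irrational case, where there is no vertical period), so one must be careful that the reversal of the symbol sequence corresponds to reflecting through a point that is actually a center of tiling symmetry, rather than an arbitrary point. The cleanest way around this is the closure argument: the tiling $\{c_i\}$ recovers, inside $\mathbb T^2$, the closed set $\overline{R_1}$ as the union of closures of its $a$-tiles, so a symbolic reversal of $\{c_i\}$ extends to an isometry of $\mathbb R^2$ carrying $R_1+L$ to itself and $R_2+L$ to itself — and any such isometry that fixes a vertical line setwise and reverses orientation on it is a $\pi$-rotation about a point of $(0,\tfrac12(s_1-s_2))+\tfrac12 L$. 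Once that is in hand, the remaining bookkeeping — which centers sit on which vertical lines, and that there are exactly three non-principal ones modulo the shift — is routine, using that the fixed-point set of these rotations is a single coset of $\tfrac12 L$.
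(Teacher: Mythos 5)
Your proposal is correct and takes essentially the same route as the paper: you realize the almost-palindromic symmetry by the rotation by $\pi$ about the seam midpoint on the line, show via a density argument in $\mathbb T^2$ (using the irrationality assumption) that this rotation preserves the bi-partition, and conclude that its center, and hence the line, passes through a translate of one of the four centers of $J^s$, $J^u$, $R_1$, $R_2$. The only cosmetic difference is that you use the closure of the projected $a$-tiles being $\overline{R_1}$, whereas the paper projects the tile endpoints to a dense subset of $J^s$; both give the same conclusion.
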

\begin{proof}
Consider a line $l$ with an almost palindromic tiling and the rotation by
$\pi$ around the point in $l$ which realizes the symmetry of the tiling.
In particular it exchanges  the respective points of intersections of $l$ with
translates of $J^s$. This rotation factors to the torus, and the set
of intersection points (the endpoints of tiles exchanged by the rotation)
projects to a dense subset of $J^s \subset \mathbb T^2$. It follows that
the rotation of $\mathbb T^2$ takes $J^s$ onto itself, and hence it preserves
the bi-partition. Thus the line $l$ contains the translate of one
of the four centers of $J^s, J^u, R_1,R_2$.
\end{proof}

Up to the shift there are exactly five almost palindromic tilings,
two containing $J^u$ (the lock), and three actually palindromic
through the centers of $J^s,R_1$ and $R_2$.

While there are only five ``global'' palindromes among the intersection
tilings, there is an abundance of ``local'' palindromes.
Let us consider bases $(e_i,f_i), i\geq 0$ from the fan of bases, defined
by the horizontal and vertical axes, and the lattice $L$. Let us recall
(\cite{S-W}) that the fan of bases contains all the bases $(e,f)$
of $L$ such that $e$ is in the first quadrant, and $f$ is in the second
quadrant. The fan of bases is described by the cutting sequence
$(s_i), s_i \in \{0,1\}$, and it is exhausted by the sequence of bases
$(e_i,f_i), i\in \mathbb Z$, where $e_{i+1} = e_i+f_i, f_{i+1} = f_{i}$
if $s_i = 1$, and   $e_{i+1} = e_i, f_{i+1} = e_i+f_i$
if $s_i = 0$. We have
$$
\left[
\begin{array}{cc} e_i & f_i  \end{array} \right]
=\left[
\begin{array}{cc} e_0 & f_0  \end{array} \right]
\left[
\begin{array}{cc} k_i & l_i \\ m_i & n_i  \end{array} \right],
\ \ \ \text{ where } \ \ \
\left[
\begin{array}{cc} k_i & l_i \\ m_i & n_i  \end{array} \right]
\in SL(2, \mathbb Z),
$$
and $k_i, l_i, m_i, n_i\geq 0$ for $i \geq 0$.

\begin{theorem}\label{palindromes}
For a basis $(e_i,f_i), i \geq 2$, let $p =k_i+m_i, r = l_i + n_i$,
and $p\geq 2, r\geq 2$.
In the principal tiling $\{c_j\}$ with the lock $[c_0,c_1]$
the three words
$
(c_2,\dots, c_{p-1}), (c_2,\dots, c_{r-1}),  (c_2,\dots, c_{p+r-1})
$
are palindromes.

Moreover
\[
\begin{aligned}
(c_2,\dots, c_{p+r-1})& =(c_2,\dots, c_{p-1},\ b,a,\ c_2,\dots, c_{r-1}) \\
  &=  (c_2,\dots, c_{r-1},\ a,b,\ c_2,\dots, c_{p-1}).
\end{aligned}
\]

\end{theorem}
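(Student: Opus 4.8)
The plan is to derive everything --- the three palindromicities and both displayed factorizations --- from one construction applied to each of the three lattice vectors $v\in\{e_i,\ f_i,\ e_i+f_i\}$: a rotation by $\pi$ of the plane, about a centre preserving the $2$-periodic tiling, which reverses the portion of the principal tiling lying between the lock and a suitable translate of it. For $v=ke_0+mf_0\in L$ put
\[
O_v=\Bigl(\tfrac12(s_1-s_2),\,0\Bigr)+\tfrac12\bigl((k+1)e_0+(m+1)f_0\bigr)=\Bigl(\tfrac12 v_x,\ \tfrac12(u_1+u_2+v_y)\Bigr),
\]
where $(v_x,v_y)=(ks_2-ms_1,\ ku_1+mu_2)$ are the Cartesian coordinates of $v$. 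Since $O_v$ differs from the centre of $J^s$ by an element of $\tfrac12 L$, the rotation $\rho_v$ by $\pi$ about $O_v$ preserves the $2$-periodic tiling; and since $O_v$ is the midpoint of the segment from the top endpoint $(0,u_1+u_2)$ of $J^u$ to the point $v$ (which is the bottom endpoint of $J^u+v$), $\rho_v$ interchanges $J^u$ with $J^u+v$. Hence $\rho_v$ maps the $y$-axis onto the vertical line $\ell_v=\{x=v_x\}$ through $J^u+v$, reversing the height about the level $\tfrac12(u_1+u_2+v_y)$. For $i\ge2$ and $p,r\ge2$ the vectors $e_i,f_i$ are fan vectors, hence nearly vertical, so $v_x$ is tiny and $\ell_v$ lies close to the $y$-axis.

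Call the \emph{coincidence property} the assertion that the intersection tilings of the $y$-axis and of $\ell_v$ agree at every height of the interval $[\,u_1+u_2,\ v_y\,]$ lying between the two locks. Granting it, $\rho_v$ takes the restriction of the principal tiling to $[\,u_1+u_2,\ v_y\,]$ to the restriction of the $\ell_v$-tiling there --- again the principal tiling, by the coincidence property --- with the height reversed about the midpoint, so this restriction is a palindrome. Because $J^s+v$ meets the $y$-axis at height $v_y$, that number is a tile endpoint; writing $\alpha_j$ for the lower endpoint of $c_j$ (so $\alpha_2=u_1+u_2$ is the top of the lock), one checks $\alpha_p=(e_i)_y=:h_i$, $\alpha_r=(f_i)_y=:\tilde u_2$, $\alpha_{p+r}=(e_i+f_i)_y$, since a vertical segment of such a height beginning at a tile endpoint meets $k_i$ (resp.\ $l_i$, $k_i+l_i$) tiles of height $u_1$ and $m_i$ (resp.\ $n_i$, $m_i+n_i$) of height $u_2$ --- here one may take $u_1/u_2$ irrational, the combinatorial tiling depending only on $s_2/s_1$. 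Taking $v=e_i,\,f_i,\,e_i+f_i$ in turn shows that $(c_2,\dots,c_{p-1})$, $(c_2,\dots,c_{r-1})$, $(c_2,\dots,c_{p+r-1})$ are palindromes.

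For the factorizations, split $[\,u_1+u_2,\ (e_i+f_i)_y\,]$ at $h_i$: the lower part gives $(c_2,\dots,c_{p-1})$ and the upper part $[\,h_i,\ h_i+\tilde u_2\,]$ gives $(c_p,\dots,c_{p+r-1})$. The line $\ell_{e_i}=\{x=\tilde s_2\}$ carries its lock exactly on $[\,h_i,\ h_i+u_1+u_2\,]$; on the $y$-axis the tile just above height $h_i$ lies in $R_2+e_i$ (because $(e_i)_x=\tilde s_2>0$) and the tile just above height $h_i+u_2$ lies in $R_1+e_i+f_0$ (because $(e_i+f_0)_x=\tilde s_2-s_1<0$), so $c_p=b$ and $c_{p+1}=a$. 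Applying the coincidence property once more, now past the lock of $\ell_{e_i}$ on the interval $[\,h_i+u_1+u_2,\ h_i+\tilde u_2\,]$, and using that the $\ell_{e_i}$-tiling is the principal sequence translated by $e_i$, one identifies $(c_{p+2},\dots,c_{p+r-1})$ with the first $r-2$ letters of the principal tiling read beyond its lock, that is with $(c_2,\dots,c_{r-1})$. Hence $(c_2,\dots,c_{p+r-1})=(c_2,\dots,c_{p-1})\,ba\,(c_2,\dots,c_{r-1})$; the mirror argument with $f_i$ (where $(f_i)_x=-\tilde s_1<0$ swaps the inserted pair) yields the second form $(c_2,\dots,c_{r-1})\,ab\,(c_2,\dots,c_{p-1})$. (Palindromicity of the longest word also drops out formally from these two identities and the palindromicity of the two shorter ones.)

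The main obstacle is the coincidence property. It amounts to showing that no translate of a horizontal side of the bi-partition separates the $y$-axis from $\ell_v$ at a height strictly between the relevant locks --- equivalently, that there is no $w\in L$ with $w_y$ in that open height interval and $w_x$ in one of the three thin strips (of total width $O(|v_x|)$) around $0$, $-s_1$, $s_2$; equivalently still, that $e_i$ is the lattice vector of least positive height with first coordinate in $(0,\tilde s_2]$, and similarly for $f_i$ and $e_i+f_i$. This is a best-approximation property of the fan vectors, to be extracted from the recursive description of the fan of bases and its cutting sequence $(s_i)$ recalled above (cf.\ \cite{S-W}); the remaining steps are bookkeeping.
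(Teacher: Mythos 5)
Your construction is, in substance, the paper's own: the rotation by $\pi$ about the midpoint exchanging $J^u$ and $J^u+v$ (a symmetry of the $2$-periodic tiling, since the centre lies in the half-lattice of symmetry centres), combined with the fact that the intersection tilings of the vertical lines swept from $l_0$ to $\ell_v$ do not change between the two locks, yields the three palindromes; and your reading of $c_p=b$, $c_{p+1}=a$ near the lock of $\ell_{e_i}$ together with one more coincidence above that lock gives the factorizations (the paper does this last step instead with the four sub-rectangles $Q_e$, $Q_f$, $Q_e+f_i$, $Q_f+e_i$ inside the big rectangle $Q$, which is only a cosmetic difference). The problem is that the entire weight of the argument rests on what you call the coincidence property, and you do not prove it: you name it ``the main obstacle'' and defer it to ``a best-approximation property of the fan vectors, to be extracted \dots\ the remaining steps are bookkeeping.'' That missing ingredient is exactly what the paper supplies, in one line, from the Klein-sail description of the fan: the lattice points on the boundary of the convex hull of the first-quadrant lattice points are precisely the $e_j$ (quoted from \cite{S-W}), hence there are no lattice points in the rectangle $Q_e$ with diagonal $e_i$, nor in $Q_f$, nor in the interior of $Q$. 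Without this (or an actual proof of your best-approximation claim), the proposal is a correct reduction of the theorem to an unproved lemma, not a proof.

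Two further points in the deferred part are not mere bookkeeping as stated. First, your ``equivalently still'' reformulation (each of $e_i$, $f_i$, $e_i+f_i$ is the lattice vector of least positive height with first coordinate in the corresponding interval) does not cover the second use of coincidence in the factorization step when $(e_i)_x>|(f_i)_x|$ (i.e.\ $s_i=1$): there you need that no lattice point lies in the strip $0<x<(e_i)_x$, $(e_i)_y<y<(e_i+f_i)_y$, and after translating by $-e_i$ part of this strip falls outside the rectangle with diagonal $f_i$, so the three stated facts do not immediately apply; the paper's single statement ``no lattice points in the interior of $Q$'' handles all uses at once, and you would either need that or an extra short argument for this case. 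Second, the inequality $(e_i)_x<s_1$, which you assert parenthetically and use to get $c_{p+1}=a$, genuinely requires the hypothesis $r\ge 2$ (it forces $l_i\ge 1$, i.e.\ a step with $s_j=0$ has occurred, after which the $x$-coordinates of the $e$-vectors stay below $s_1$); as written it is an unproved claim. The irrationality trick for $\alpha_p=(e_i)_y$ is legitimate, but these loose ends, together with the unproved coincidence property, are where your write-up falls short of the paper's proof.
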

The properties of Sturmian words from this theorem
were formulated by de Luca and Mignosi in \cite{L-M}.

For any relatively prime numbers $p$ and $r$ there
is exactly one palindromic word
$(c_2,\dots, c_{p+r-1})$ such that
$(c_2,\dots, c_{p-1})$ and $(c_2,\dots, c_{r-1})$ are also
palindromic. By exactly one we mean up to the exchange of
the symbols $a$ and $b$. This fact was formulated by Raphael M.
Robinson \cite{R} and proven, among others, by Pedersen \cite{P}.

\begin{proof}
Let us consider all the lattice points in the first quadrant, and
its convex hull. The boundary of the convex hull (the Klein's sail)
contains lattice points (i.e., points from $L$). They can be described
using the fan of bases $(e_j,f_j), j \in \mathbb Z $,
associated with the lattice
$L$ and the horizontal and vertical axes, \cite{S-W}. Namely the set of points
on the boundary of the convex hull is equal to $\{e_j| j\in \mathbb Z\}$.
It follows that there are no lattice points in the rectangle
$Q_e$ with the horizontal base, the origin at the lower left vertex,
and the diagonal equal to $e_i$,
shown in Fig. 1.
\begin{figure}[ht]{Fig. 1}
  \centering
    \includegraphics[width=1\textwidth]{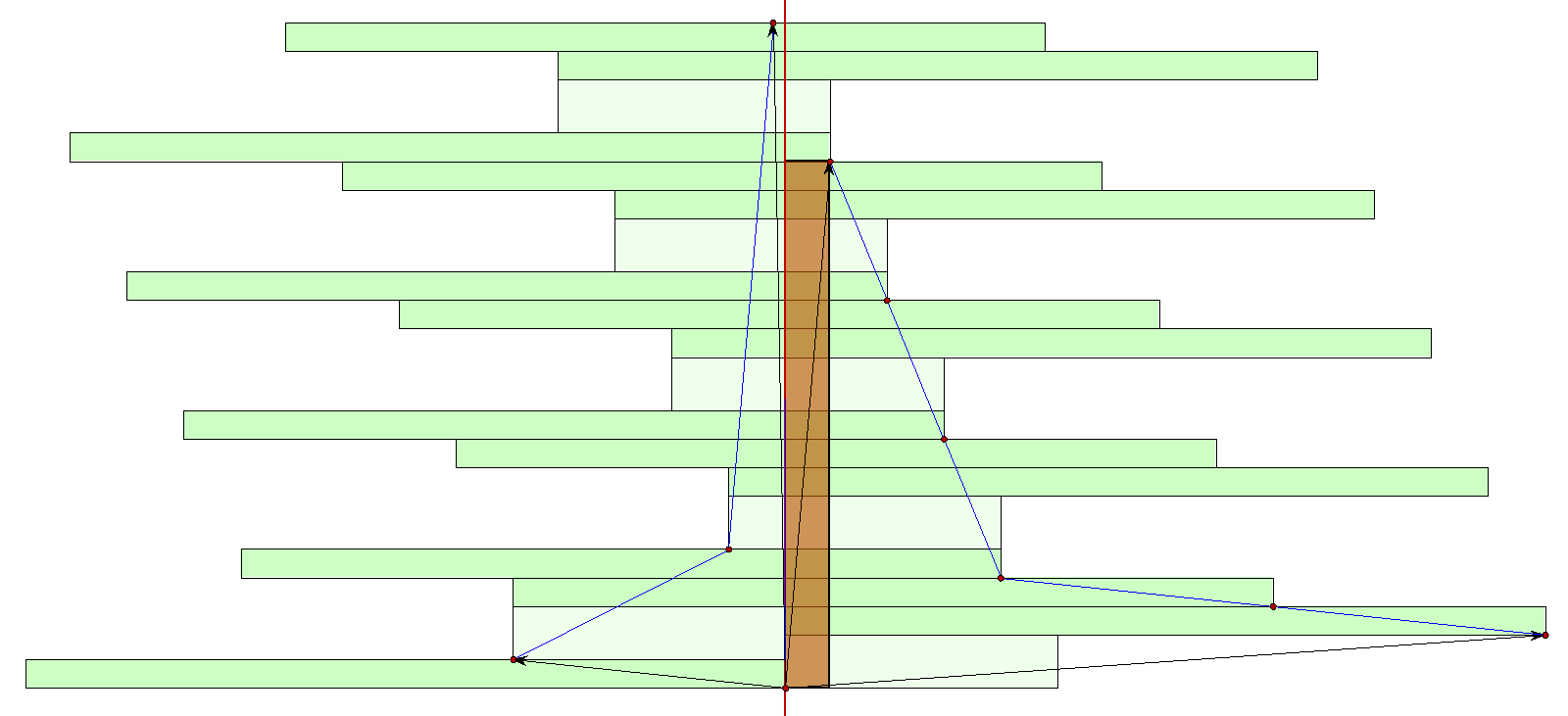}
\end{figure}
We consider the rotation by $\pi$ which exchanges $J^u$ and $J^u+e_i$.
It takes the vertical axis $l_0$ into $l_0+e_i$. As we move the line $l_0$
towards $l_0+e_i$ the part of tiling $(c_0,c_1,c_2, \dots,c_{p-1})$ does not
change, since the tilings change only at the crossing of a translated vertical
spine and there are none on the way.

Since the principal tiling is palindromic outside of the lock, it follows that
$(c_2, \dots,c_{p-1})$ must be a palindrome.

The proof for second palindrome requires the introduction of the
rectangle $Q_f$ with the horizontal base, the origin at the lower right vertex,
and the diagonal equal to $f_i$. The rest of the argument is the same.
Clearly it can also be repeated for  $e_i+f_i$ and the third palindrome.

To prove the second part let us consider the four rectangles $Q_e$, $Q_f$, $Q_e+f_i$ and $Q_f+e_i$.
They all fit into the rectangle $Q$ with the horizontal base
containing the origin, and the other sides containing $e_i$, $f_i$ and
$e_i+f_i$, respectively, Fig.2 (note that it has  different scalings
than Fig.1). Since there are no  lattice points in
the interior of $Q$,
the tilings on the left and right sides of $Q$ differ only by the change
in the lock.

\begin{figure}[ht]{Fig. 2}
  \centering
    \includegraphics[width=1\textwidth]{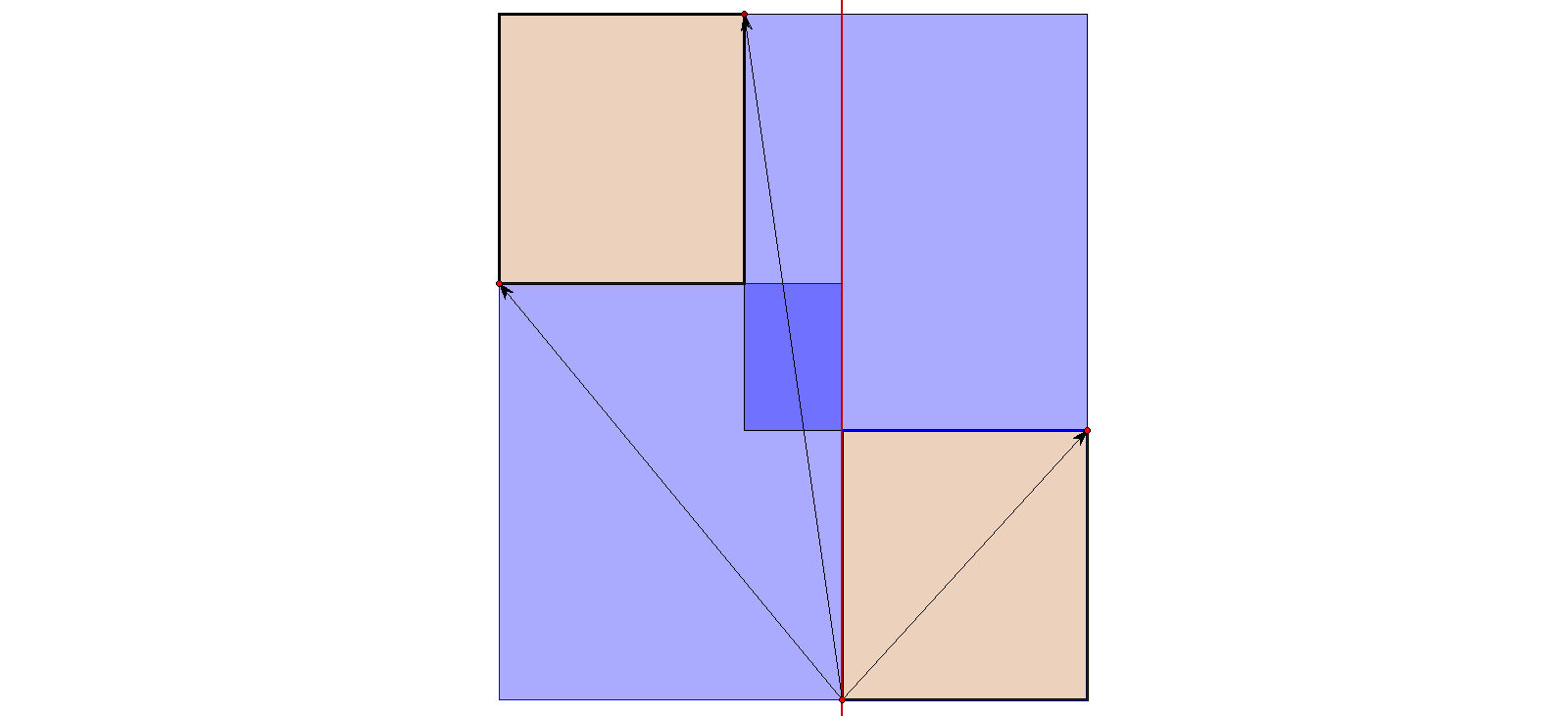}
\end{figure}

The tilings in $Q_e$ and $Q_e+f_i$,
(also in $Q_f$ and $Q_f+e_i$) are the same.
The segment $l_0+e_i\cap Q$ is covered
by $Q_e$ and $Q_f+e_i$, while the segment
$l_0+f_i\cap Q$ is covered
by $Q_f$ and $Q_e+f_i$. It gives us the two representations of
the longest palindrome

\end{proof}

Let us note that each of the two parts of
Theorem~\ref{palindromes} allows algorithmic build up of the principal
tiling based on the cutting sequence. Indeed let the one sided cutting sequence
be equal to  $(s_0,s_1,\dots) = (1,1,0,1,1,1,0,0,\dots$.
We get a sequence of vectors starting from $e_0 =(1,0), f_0=(0,1)$
which are the consecutive lattice points in the boundary of Klein's sails
in the first and the second quadrants
\[
\left[ \begin{array}{c} 1  \\  1  \end{array} \right]
\left[ \begin{array}{c} 1  \\  2  \end{array} \right]
\left[ \begin{array}{c} 1  \\  3  \end{array} \right]
\left[ \begin{array}{c} 2  \\  5  \end{array} \right]
\left[ \begin{array}{c} 3  \\  8  \end{array} \right]
\left[ \begin{array}{c} 4  \\  11  \end{array} \right]
\left[ \begin{array}{c} 5  \\  14  \end{array} \right]
\left[ \begin{array}{c} 9  \\  25  \end{array} \right]
\left[ \begin{array}{c} 13  \\  36  \end{array} \right]
\]
They give us all the information necessary to build the
palindrome with $12$ symbols $a$ and $35$ symbols $b$,
and we can do it either using the knowledge of the lengths
of the palindromes alone, or the second part of
Theorem~\ref{palindromes}.
\[
\begin{aligned}
&b\\&\\
&b \ b\\&\\
&bb \ a \ bb = b \ b a \ bb =bb \ a b \ b\\&\\
&bbab  \ b \ babb =  bb \ ab  \ bbabb = bbabb \ ba \ bb\\&\\
&bbabbb \ a \ bbbabb =  bbabbbabb \ ba \ bb =  bb \ ab \ bbabbbabb\\&\\
&bbabbbab \ b \ babbbabb =
 bbabbbabbbabb \ ba \ bb = bb \ ab \ bbabbbabbbabb\\&\\
&bbabbbabbbabbbab \ \  babbbabbbabbbabb =\\
&bbabbbabbbabb \ ba \         bbabbbabbbabbbabb =\\
&bbabbbabbbabbbabb \ ab \ bbabbbabbbabb \\&\\
&bbabbbabbbabbbabbabbbab \ b \
 babbbabbabbbabbbabbbabb =  \\
&bbabbbabbbabb \ ba \ bbabbbabb
 babbbabbabbbabbbabbbabb =  \\
 &bbabbbabbbabbbabbabbbabb
 babbbabb \ ab \ bbabbbabbbabb
\end{aligned}
\]

In the rational case the fan of bases has the last
element $(e_N,f_N)$ such that $e_N+f_N$ is vertical.
Accordingly the cutting sequence ends and the tilings
are periodic, with the period equal to the sum of
components of $e_N+f_N$. Theorem~\ref{palindromes} applies also in this case,
and we can obtain the detailed structure of the period
as shown in the example.

A segment of the principal tiling containing the vertical spine
(the lock) will be called a {\it window}. Let us consider one of the
palindromes of Theorem~\ref{palindromes} of length $k+m-2$ with
$k-1$ symbols $a$ and $m-1$ symbols $b$.
\begin{proposition}\label{types}
For the principal tiling any window with $k+m$  tiles has $k$ tiles
of type $a$ and $m$ tiles of type $b$.
\end{proposition}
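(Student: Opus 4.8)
The plan is to reduce the statement to the palindromicity of one particular factor of the principal tiling, and then to read off the two constants from a single conveniently chosen window. First I would normalize the principal tiling $\{c_j\}$ by a shift so that its lock is $[c_0,c_1]$, recalling that the lock always consists of one tile $a$ and one tile $b$. A window with $k+m$ tiles must contain both tiles of the lock, so it is one of the $k+m-1$ factors $W_i=(c_{-i},\dots,c_{k+m-1-i})$ for $0\le i\le k+m-2$. For $W_0=(c_0,c_1,c_2,\dots,c_{k+m-1})$ the count is immediate: it is the lock, contributing one tile of each type, followed by the palindrome of Theorem~\ref{palindromes} under consideration, which contributes $k-1$ tiles $a$ and $m-1$ tiles $b$; hence $W_0$ has $k$ tiles $a$ and $m$ tiles $b$.

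The core step is then to show that all the windows $W_0,\dots,W_{k+m-2}$ have the same composition, and for this it suffices to compare $W_i$ with $W_{i+1}$ for $0\le i\le k+m-3$. Passing from $W_i$ to $W_{i+1}$ one drops the rightmost tile $c_{k+m-1-i}$ and adjoins the tile $c_{-i-1}$ on the left; using the palindromicity of the principal tiling away from the lock ($c_{-r}=c_{1+r}$ for $r\ge 1$) one rewrites the new tile as $c_{-i-1}=c_{i+2}$. The two tiles being exchanged, $c_{i+2}$ and $c_{k+m-1-i}$, occupy mirror positions in the word $(c_2,\dots,c_{k+m-1})$ — the $(i+1)$-st letter counted from the left and the $(i+1)$-st counted from the right — and since that word is precisely the palindrome of Theorem~\ref{palindromes} we started with, the two tiles are of the same type. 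Thus deleting one and inserting the other changes neither the $a$-count nor the $b$-count, and the claim follows.

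I do not expect a genuine obstacle, only careful bookkeeping: one must keep the index ranges straight ($0\le i\le k+m-2$ for the windows, $0\le i\le k+m-3$ for the comparisons, $r\ge1$ when invoking $c_{-r}=c_{1+r}$) and dispose of the degenerate case $k+m=2$, where the only window is the lock itself and there is nothing to compare. It is worth flagging that the hypothesis that $(c_2,\dots,c_{k+m-1})$ is one of the palindromes of Theorem~\ref{palindromes} is doing all the work: if that factor were not palindromic, the same computation would exhibit two windows $W_i$, $W_{i+1}$ with different compositions, so the clean formula genuinely relies on the palindromic lengths singled out by Theorem~\ref{palindromes}. A purely geometric variant — sliding the endpoints of the subinterval of $l_0$ carved out by the window, exactly as in the proof of Theorem~\ref{palindromes} — is available, but the combinatorial argument above is shorter.
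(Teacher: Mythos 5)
Your argument is correct and rests on exactly the same two facts as the paper's proof — the palindromicity of the principal tiling about the lock ($c_{-r}=c_{1+r}$) and the palindromicity of the factor $(c_2,\dots,c_{k+m-1})$ from Theorem~\ref{palindromes} — so it is essentially the same approach. The only difference is presentation: the paper identifies every window at once as $(w_2\,[c_0,c_1]\,w_1)$ for a split $w_1w_2$ of the palindrome, hence an anagram of the window $([c_0,c_1]\,w_1w_2)$, whereas you obtain the same content-invariance by a telescoping comparison of adjacent windows.
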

\begin{proof}
We have the following structure of the symmetric section of the
principal tiling
\[
(c_{k+m-1},  \dots,   c_3,  c_2, \,[c_0, c_1,]\,c_2, c_3, \dots,
c_{k+m-1})  =
(w_1w_2 \,[c_0, c_1,]\, w_1w_2)
\]
where $(w_1w_2)$ is any split of the palindrome into two words.
Hence $(w_2 \,[c_0, c_1,]\, w_1)$ is a window with $k+m$ tiles
and it has the same content as the window $([c_0, c_1,]\, w_1w_2 )$.
\end{proof}

\section{The number of Berg partitions}\label{numb}
Recall that a bi-partition is a Berg partition for a toral
automorphism $\mathcal F$,
if $\mathcal F(J^s) \subset J^s, \mathcal F(J^u) \supset J^u$, \cite{S-W}.

Let us  assume that the bi-partition $\{R_1,R_2\}$ is a Berg partition for
an automorphism $\mathcal F$ described in the basis $\{e_0,f_0\}$
by $F \in GL(2,\mathbb Z),\ \ \
F =\left[
\begin{array}{cc} k & l \\ m & n  \end{array} \right], k,l,m,n\geq 0$,
which preserves the horizontal and vertical axes.

This is the case if $(u_1,u_2)$ and $(s_1,s_2)$ are, respectively,
the row and the  column
eigenvectors of $F$ with positive entries , \cite{S-W}.
If $\lambda > 1 $ is the unstable eigenvalue of $F$ then
$\lambda u_1 = ku_1 + mu_2, \lambda u_2 = lu_1+nu_2$.

The inclusion $\mathcal F(J^u) \supset J^u$ is obvious since
$\mathcal F(J^u)  = [0,\lambda(u_1+u_2)]$. The other inclusion
$\mathcal F(J^s) \subset J^s$ is also obvious in the case $\det F = 1$.
However if the stable eigenvalue is negative (in the case
$\det F = -1$) the origin could be too close to an endpoint
for the inclusion to hold. It can be checked by direct estimates
that the endpoints of $J^s$ are sufficiently far from the origin,
but we will establish that by a method based on  studying
the contents of windows of different sizes, which will be
crucial in the following.

Let us introduce the segments of the vertical axis
$\alpha_1 = [0,u_1], \beta_2=[u_1,u_1+u_2],
\beta_1 = [0,u_2], \alpha_2=[u_2,u_1+u_2]$. The vertical spine
is split in two ways $J^u = \alpha_1\cup\beta_2 = \beta_1\cup \alpha_2$.

We consider the windows containing the palindromes
from Theorem~\ref{palindromes} with  the number of tiles $p = k+m$, $r= l+n$
and $p+r = k+m+ l+n$, respectively
$W_\alpha = ([c_0,c_1],c_2,\dots,c_{p-1}), W_\beta =([c_0,c_1],c_2,\dots,c_{r-1}), \
W_\gamma =([c_0,c_1],c_2,\dots,c_{p+r-1})$.
The window $W_\alpha$ has $k$ symbols $a$ and $m$  symbols $b$,
so its  length is equal to $\lambda u_1$. It follows that
$\mathcal F(\alpha_1) = [0,\lambda u_1]$ is equal to the window
$W_\alpha$. Similarly $\mathcal F(\beta_1) = [0,\lambda u_2]$
is equal to the window $W_\beta$. It follows that the endpoints of
$\alpha_1$ and $\beta_1$ are mapped by $\mathcal F$ into the horizontal
spine $J^s$. (Remember that the tilings are obtained by cutting a vertical
line by the translates of $J^s$.)
Since these points are also the endpoints of $J^s$ we conclude
that $\mathcal F(J^s) \subset J^s$, and hence the bi-partition is
a Berg partition.

There are translates of $\{R_1,R_2\}$ which are also Berg partitions.
We will count the number of such translates, up to the following
equivalence. We say that two Berg partitions $\{Q_1,Q_2\}$
and $\{S_1,S_2\}$ for the toral automorphism $\mathcal F$ are equivalent
if there is a toral affine map $\mathcal U$ such that
$$
\mathcal F \circ \mathcal U = \mathcal U \circ \mathcal F,
\ \ \ \mathcal U(Q_i) = S_i, i = 1,2.$$
Note that any homeomorphism commuting with a hyperbolic toral automorphism
must be an affine map, \cite{A-P}.

\begin{theorem}\label{bergs}
The number of nonequivalent Berg partitions for $\mathcal F$
which are translates of the bi-partition $\{R_1,R_2\}$ is equal to
$\left[\frac{k+m+l+n}{2}\right]$.
\end{theorem}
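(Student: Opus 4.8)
The plan is to parametrize the translates of the bi-partition $\{R_1,R_2\}$ that remain Berg partitions, and then quotient by the equivalence relation. A translate moves the origin to some point $z$; it is a Berg partition for $\mathcal F$ precisely when the translated spines contain a fixed point of the affine map that $\mathcal F$ induces. So the first step is to identify the fixed points of $\mathcal F$ acting on $\mathbb T^2$ (equivalently, the points $z$ such that $(F-\mathrm{Id})z \in L$, or $(F+\mathrm{Id})z\in L$ in the $-F$ case), and to observe that the number of such fixed points is $|\det(F-\mathrm{Id})| = |\lambda-1||\lambda^{-1}\varepsilon - 1|$ where $\varepsilon = \det F$. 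For $\det F = 1$ this is $\lambda + \lambda^{-1} - 2 = \operatorname{tr} F - 2 = k+n-2$, which already does not match $[\frac{k+m+l+n}{2}]$, so the real content is that the Berg condition $\mathcal F(J^s)\subset J^s$ is \emph{not} automatic once we translate: the origin must sit far enough from the endpoints of the spines, exactly as in the non-translated case discussed before the statement. Thus I would first pin down which fixed points, when placed on the spines, actually yield the inclusions $\mathcal F(J^s)\subset J^s$ and $\mathcal F(J^u)\supset J^u$.

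The key device is the window analysis already set up: a translate placing a fixed point $z$ at position determined by splitting $J^s = J_1'\cup J_2'$ and $J^u$ according to how the principal tiling's lock is shifted. Moving the vertical line from $l_0$ to a parallel line, the intersection tiling changes only when crossing a translate of the vertical spine, and the window (the finite word around the lock) determines the positions of the images of the spine's endpoints under $\mathcal F$. Since $\mathcal F$ expands $J^u$ by $\lambda$ and $\mathcal F(\alpha_1)$, $\mathcal F(\beta_1)$ are exactly the windows $W_\alpha$, $W_\beta$ of sizes $p=k+m$ and $r=l+n$, a translate is Berg iff the translated lock lies inside the principal palindrome of length $p+r-2$ — i.e. iff the window of size $p+r$ is a sub-window of $W_\gamma$. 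By Proposition~\ref{types} every window of size $p+r$ has the same letter content, and by Theorem~\ref{palindromes} the palindrome $W_\gamma$ admits the symmetric structure, so the admissible shifts of the lock correspond bijectively to the $p+r-1$ interior positions of this palindrome, or rather to the $p+r-1$ "slots" between consecutive tiles in $W_\gamma$. This should give exactly $k+m+l+n-1$ translates that are Berg partitions (matching Séébold's count of substitutions), realized as the $p+r-1$ ways of cutting the longest palindrome.

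The last step is to divide by the equivalence relation. The affine involution $\mathcal U$ given by the rotation by $\pi$ around the center of a spine (which, by Section 3, preserves the 2-periodic tiling and hence conjugates Berg partitions to Berg partitions) reverses the orientation of the window, i.e. sends the cut at slot $j$ of the palindrome $W_\gamma$ to the cut at slot $(p+r)-j$. This pairs up the $p+r-1$ admissible translates into $\lfloor \frac{p+r-1}{2}\rfloor$ pairs plus possibly one fixed point (the central slot, when $p+r$ is even). A short parity check then yields $\lfloor\frac{p+r-1}{2}\rfloor$ or $\lceil\frac{p+r-1}{2}\rceil$ — and because $p+r = k+m+l+n \equiv \det F + \operatorname{tr} F \pmod 2$, one checks this always equals $\lfloor\frac{k+m+l+n}{2}\rfloor$. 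One must also verify that no two admissible translates are equivalent via an affine map \emph{other} than this rotation; here I would invoke that any homeomorphism commuting with a hyperbolic $\mathcal F$ is affine (cited as \cite{A-P}), and that the centralizer of $\mathcal F$ in affine maps of $\mathbb T^2$ is generated by translations by fixed points together with the $\pm\mathrm{Id}$ symmetry, so the rotation by $\pi$ is the only nontrivial identification.

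The main obstacle I expect is the careful bookkeeping in the middle step: translating "the origin lies on both translated spines at a fixed point" into "the lock sits at slot $j$ of $W_\gamma$" requires matching the \emph{two} ways of splitting $J^u$ (the $\alpha_1\cup\beta_2$ and $\beta_1\cup\alpha_2$ decompositions) with the two orientations of the lock $[a,b]$ versus $[b,a]$, and confirming that the Berg inclusion $\mathcal F(J^s)\subset J^s$ — rather than merely $\mathcal F$ mapping spine endpoints into $J^s$ — holds for \emph{all} these translates, including the delicate $\det F = -1$ case where one endpoint could a priori escape. The window/palindrome structure from Theorem~\ref{palindromes} is precisely what rules this out, but assembling it into a clean bijection, and then doing the $\pm$ parity accounting against $\det F$, is where the work lies.
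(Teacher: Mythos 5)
Your route is the paper's route: parametrize the admissible translates by the position of the lock inside a window of $p+r$ tiles of the principal tiling, use Proposition~\ref{types} to see all such windows have the same content, and then quotient by the rotation by $\pi$. But the core of the paper's proof is precisely the step you postpone as ``where the work lies'': the \emph{sufficiency} verification that every one of the $p+r-1$ windows really yields $\mathcal F(J^s)\subset J^s$. The paper does this by splitting the chosen window $W=\mathcal F(J^u)$ into consecutive segments $w_1\cup w_2$ of $p$ and $r$ tiles; one of the two is itself a window (and when the split point falls inside the lock one uses the two readings $[a,b]$ versus $[b,a]$), so Proposition~\ref{types} forces $w_1,w_2$ to have the contents of $W_\alpha,W_\beta$, hence lengths $\lambda u_1,\lambda u_2$; consequently the two endpoints of $J^s$, which in $\mathbb T^2$ are the interior points $(0,u_1)$ and $(0,u_2)$ of $J^u$, are mapped to tile endpoints, i.e.\ into $J^s$ (and the delicate $\det F=-1$ case is covered by the same computation). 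Without this argument you have only the necessity direction (Berg $\Rightarrow$ the image of $J^u$ is a $(p+r)$-window), so $p+r-1$ is merely an upper bound. Note also that your criterion ``iff the window of size $p+r$ is a sub-window of $W_\gamma$'' cannot be right as stated, since $W_\gamma$ has exactly $p+r$ tiles; the correct statement is that $\mathcal F(J^u)$ must be one of the $p+r-1$ windows with $p+r$ tiles containing the lock. Your opening reduction (``Berg iff the translated spines contain a fixed point'') is only a necessary condition and plays no role in the rest of your argument, so it is harmless but also not needed.

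Two further defects sit in your final accounting. First, the congruence $p+r\equiv\det F+\operatorname{tr}F \pmod 2$ is false (for $F=\ma{1}{1}{1}{2}$ one has $p+r=5$ while $\det F+\operatorname{tr}F=4$), and no parity input of this kind is needed: the involution $j\mapsto (p+r)-j$ acting on the $p+r-1$ lock positions has a fixed position exactly when $p+r$ is even, and in both parities the number of orbits is $\left[\frac{p+r}{2}\right]$, which is the asserted count. Second, your description of the affine centralizer of $\mathcal F$ as generated by translations and $\pm\mathrm{Id}$ is not correct in general: it also contains the toral automorphisms coming from the (infinite) centralizer of $F$ in $GL(2,\mathbb Z)$. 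The exclusion of other identifications should instead be argued as in the paper: an equivalence $\mathcal U$ carries one translate of the fixed bi-partition to another with the same rectangles, hence maps $J^u$ onto itself and exchanges the two marked fixed points inside it, which forces $\mathcal U$ to be the rotation by $\pi$ about the center of $J^u$ and the two positions to be symmetric.
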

\begin{proof}
Instead of translating the bi-partition we fix the rectangles
$\{R_1,$ $R_2\}$ in the affine plane and translate the origin inside
$J^u$. We get immediately $\mathcal F(J^u) \supset J^u$, and if
under such a translation the bi-partition is still a Berg
partition then $\mathcal F(J^u)$ is the window $\widehat W_\gamma$ of
the principal tiling with $p+r$ tiles, which by Proposition~\ref{types} has the same
contents as  the window $W_\gamma$.

Conversely let us consider a window $W$ with $p+r$ tiles,
e.g., $W_\gamma$ is such a window.
There are $p+r-1$ such windows, and by Proposition~\ref{types}
they have the same contents, i.e., they contain $k+l$
tiles of type $a$ and $m+n$ tiles of type $b$.
It follows that their lengths are the same and equal to
$\lambda(u_1+u_2)$.  For a given window $W$ we place the origin in
$J^u$ in such a way that $\mathcal F(J^u) = W$. We claim that in this way
we will obtain a Berg partition, that is we need to prove that
$\mathcal F(J^s) \subset J^s$. By the construction the endpoints of
$J^u$ belong to $J^s$ and are mapped to the endpoints of the window $W$.
Since a tiling is a partition of a vertical line by the translates of
$J^s$ we conclude that the endpoints of $J^u$ are mapped into
$J^s$. We need to explore the image of the endpoints of $J^s$.
These two points split the lock $J^u$ in two ways,
$\alpha_1\cup\beta_2 = \beta_1\cup \alpha_2$, or in symbolic representation
$[a,b]$ and $[b,a]$.

Accordingly let us examine the split of the window $W$ into two
segments $W =w_1\cup w_2$, containing $p$ and $r$ tiles respectively.
One of the segments will be a window itself, with the exception in the
case when the common endpoint of $w_1$ and $w_2$ splits the lock.

If say $w_1$ is a window then by Proposition~\ref{types} it has the same contents as
$W_\alpha$, i.e., $k$ symbols $a$ and $m$ symbols $b$. It follows that
the segment $w_2$ (although it is not a window itself) has the same
contents as $W_\beta$. If the common point of $w_1$ and $w_2$ falls into
the lock $J^u$, then we can exploit the ambiguity in the splitting
of the lock, and choose one so that $w_1$ and $w_2$ have the same
contents as $W_\alpha$ and $W_\beta$. We conclude that the length
of $w_i$ is $\lambda u_i, i=1,2$, and $\mathcal F(\alpha_1) = w_1,
\mathcal F(\beta_2) = w_2$. Hence the endpoint $(0,u_1)$ of $J^s$
(in $\mathbb T^2$) is mapped
into the endpoint of $w_1$, which belongs to $J^s$.

We can repeat this argument splitting the window $W$ into two segments
$w_1$ and $w_2$ with reversed contents, $r$ tiles and  $p$ tiles, respectively.
We will conclude then that also the other endpoint $(0,u_2)$ of $J^s$
is mapped into $J^s$. It follows that  $\mathcal F(J^s) \subset J^s$
and our bi-partition is a Berg partition.

For different choices of the window $W$ we obtain different locations
of the origin in $J^u$. Two of thus obtained Berg partitions
are equivalent if and only if the respective locations of the origin
in $J^u$ are symmetric. Indeed, any toral affine map $\mathcal U$
preserving the bi-partition $\{R_1,R_2\}$ must map $J^u$ onto itself,
and the fixed point contained there into the other fixed point.
It happens only for $\mathcal U$ equal to the rotation by $\pi$
around the center
of $J^u$, and the two fixed points located symmetrically inside $J^u$.

We conclude that the number of nonequivalent Berg partition is
equal to $\frac{p+r-1}{2}$ if $p+r$ is odd, and $\frac{p+r}{2}$ if
$p+r$ is even.
\end{proof}
We can visualize the different Berg partitions by considering
the refinement of the 2-periodic tiling of $\mathbb R^2$ and its preimage
under $\mathcal F$. In other words we consider the partition of
the torus $\mathbb T^2$ by $J^u$ and $\mathcal F^{-1}(J^s)$.
This partition is mapped onto the partition of the torus by
$J^s$ and $\mathcal F(J^u)$. These partitions are shown in Fig.~3
for the matrix $F =\ma{5}{2}{7}{3}$, and the origin at the lower endpoint of
$J^u$. Let us note
that they are generating Markov partitions for the toral automorphism.
\begin{figure}[ht]{Fig. 3}
  \centering
    \includegraphics[width=1\textwidth]{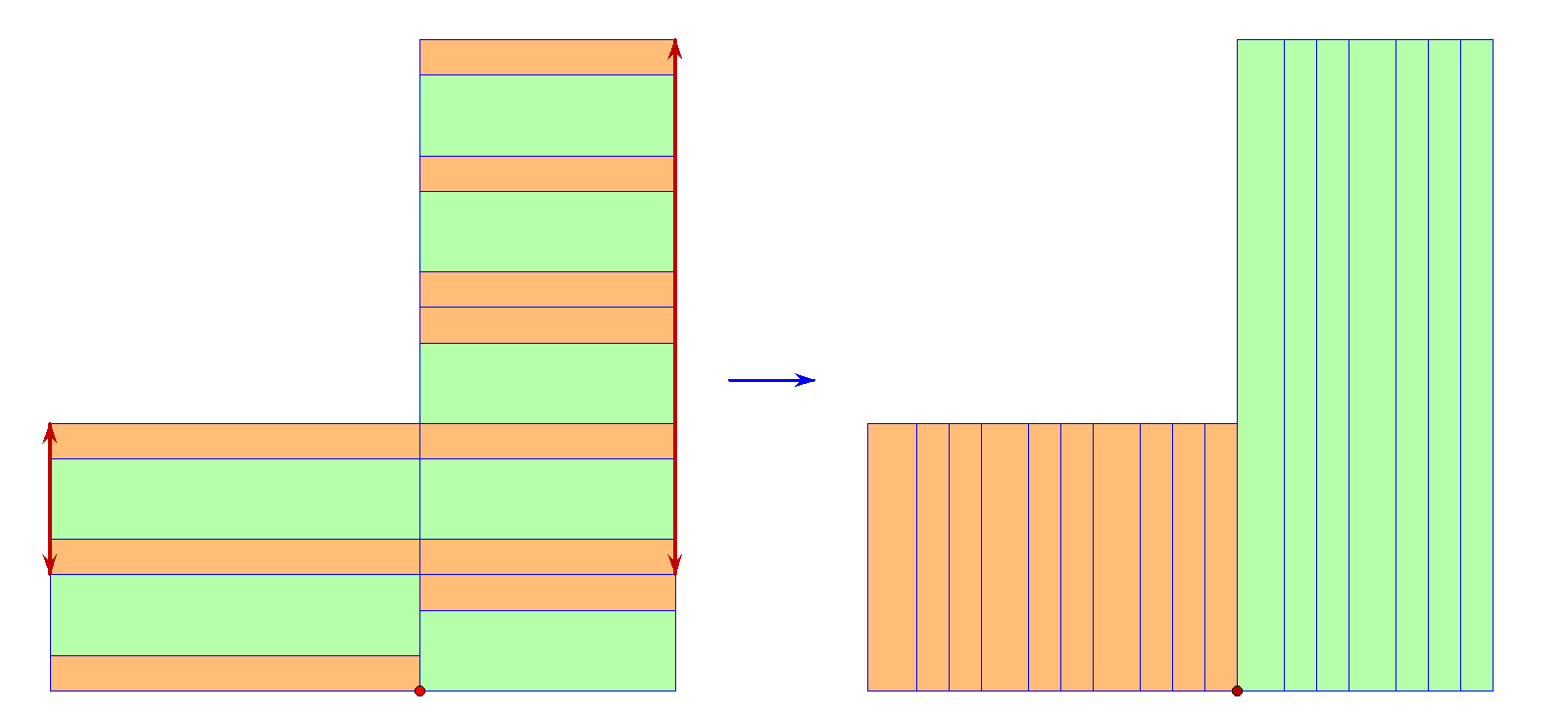}
\end{figure}
The number of wide and narrow horizontal rectangles which split
$R_1$ and $R_2$ are equal to the respective entries of the matrix $F$,
and is the same for all Berg partitions. However their order is different
for different Berg partitions. The order is the same as the order of
tiles in the two segments $w_1\cup w_2 = \mathcal F(J^u)$ from the
proof of Theorem~\ref{bergs}, with $p= k+m$ and $r=l+n$ tiles respectively.
The ambiguity in the lock is resolved by the following rule.
For $\det F = 1$ the lock is chosen as
$[a,b]$, and for $\det F = -1$ it  is chosen as $[b,a]$.

We can see that the connectivity matrix  for a Berg partition
is equal to $F^T$. Theorem~\ref{main}, proven  in \cite{S-W}, is somewhat
more general than Theorem~\ref{bergs}. It claims  that
for any toral automorphism  the number
of nonequivalent Berg partitions with a given connectivity matrix
is equal to half the sum of its entries.
It can be derived  from Theorem~\ref{bergs} as follows.

It was proven in \cite{S-W} that   a hyperbolic toral automorphism
is given by a  matrix $F \in GL(n,\mathbb Z)$ with nonnegative
entries, or $-F$, if and only if the chosen basis of the lattice $L$
is from the fan of bases associated with the stable  and
unstable lines. If the automorphism $\mathcal F$
is given by $F$ with non-negative entries then Theorem~\ref{bergs} applies.

It remains to consider the case of the toral automorphism
which is given by $-F$. Let $\mathcal S$ be the rotation by
$\pi$ around the centers of the bi-partition $\{R_1,R_2\}$,
associated with the chosen basis $(e_0,f_0)$ of the lattice $L$.
As in the proof of Theorem~\ref{bergs}, we fix the bi-partition  and translate
the origin  to define $\mathcal F$ with the representation $F$. However $\mathcal S$ is tied to
the bi-partition, and not to the shifting origin.
For any choice of the origin in $J^u$, consider the mapping $\mathcal F \circ
\mathcal S$. It has a fixed point in $J^u$, different from the origin,
unless the origin falls into the center of $J^u$. If we place the origin
in this fixed point, the mapping    $\mathcal F \circ \mathcal S$
becomes a toral automorphism given by the matrix $-F$ in the basis
$(e_0,f_0)$, and the bi-partition is its Berg partition. Indeed
$S(J^{u}) = J^u, S(J^{s}) = J^s$, and hence
$\mathcal F \circ \mathcal S (J^u) \supset J^u$ and
$\mathcal F \circ \mathcal S (J^s) \subset J^s$.
Moreover the connectivity matrices for the  Berg partitions are the same.

In this way we put the Berg partitions of $\mathcal F$ and
$\mathcal F \circ \mathcal S$ into 1-1 correspondence.
We can conclude that the number of Berg partitions with the connectivity
matrix $F^T$, for the toral automorphisms $\mathcal F$ and
$\mathcal F \circ \mathcal S$ is the same.

\section{Substitution tilings}\label{subst}

We have seen that a Berg partition for a toral automorphism $\mathcal F$
comes with the distinctive pattern of the generating Markov partition,
with the boundaries equal to $J^u \cup \mathcal F^{-1}(J^s)$, Fig 2.
These patterns are associated with substitutions preserving
the tilings. We will describe this in detail.

The toral automorphism $\mathcal F$ acts on $\mathbb T^2$,
which for any fixed bi-partition can be interpreted as the space of
tilings. If the underlying bi-partition
is a Berg partition then on the symbolic level the action
of $\mathcal F$ is by substitutions.
More precisely let us consider the  palindromes
from Theorem~\ref{palindromes} with lengths $p-2 = k+m-2$ and $r-2= l+n -2$,
respectively
$\alpha' = (c_2,\dots,c_{p-1})$and $\beta' =(c_2,\dots,c_{r-1})$.

We consider the substitutions
\[
\begin{aligned}
a \mapsto \alpha = [a,b]\alpha',  \ \ \ \ \ & a \mapsto \alpha = [b,a]\alpha',  \\
b \mapsto \beta = [b,a]\beta',  \ \ \ \ \ & b \mapsto \beta= [a,b]\beta',
\end{aligned}
\]
the left one if $\det F =1$, and  the right one if $\det F =-1$.
We will call such a substitution the {\it standard} substitution.

The principal tiling is invariant under the standard substitution.
More generally for a tiling on the vertical line passing through a point
 $h \in J^s$ the standard substitution  produces the tiling on the line
through the point $\mathcal F(h)\in J^s$. Further for any vertical line
containing a fixed  point of $\mathcal F$ the corresponding tiling will
be invariant under the standard substitution, up to the shift. It means
that after the substitution the new tiling is the same as old,
when viewed from the appropriate place, determined by the location of
the fixed point.

Shifting the origin in $J^u$, as in the proof of Theorem~\ref{bergs}, we obtain
another Berg partition, and also another substitution. It is defined
by the splitting of the window
$W = \mathcal F(J^u) = w_1\cup w_2$, where $w_1$ has $p = k+m$ tiles and $w_2$
has $r= l+n$ tiles.  The substitution is
$a \mapsto w_1,  \ b \mapsto w_2$, with the provision that
for $\det F = 1$, the lock is chosen as
$[a,b]$, and for $\det F = -1$ it  is chosen as $[b,a]$.

There are at most $p+r-1$ of these substitutions, according to the choice
of the window $W$. All of them must be actually different. Indeed we obtain
them from the standard substitution by cyclic permutations of the words
$\alpha$ and $\beta$. Should two different cyclic permutations of
$\alpha$ produce the same word, then the number of symbols $a$ (equal to $k$)
and the number of symbols $b$ (equal to $m$) would have to have a common
factor, which is not the case.  Since also $p$ and $r$ are relatively prime
there is no common multiple of $p$ and $r$ smaller than $p+r$. We conclude
that all of the $p+r-1$ cyclic permutations of $\alpha$ and $\beta$ deliver
different pairs of words, and hence different substitutions.

The counting of Berg partitions is different from the counting of substitutions
since the toral affine map $\mathcal S$ defined above conjugates
two Berg partitions with reversed substitutions.

S\'{e}\'{e}bold \cite{Seb} proved that there are exactly $k+l+m+n-1$
combinatorial substitutions
which preserve Sturmian words associated with a given matrix
$F \in GL(2,\mathbb Z)$ with nonnegative entries.

Combining this result with Theorem~\ref{bergs} we conclude that
all these combinatorial substitutions can be realized
geometrically as Berg partitions with the connectivity matrix $F^T$.
It follows that any question about the properties of Sturmian sequences
with substitution
symmetry becomes a question  about Berg partitions for the respective
toral automorphism.

\end{document}